\numberwithin{equation}{section}
\newtheorem{thm}{Theorem}
\newtheorem{problem}{Problem}
\newcommand{\Z}{\mathbb{Z}}
\newcommand{\R}{\mathbb{R}}
\newcommand{\C}{\mathbb{C}}
\newcommand{\G}{\mathcal{G}}
\newcommand{\M}{\mathcal{M}}
\newcommand{\E}{\mathbb{E}}
\newcommand{\V}{\mathcal{V}}
\newcommand{\SL}{\mathrm{SL}}
\newcommand{\GL}{\mathrm{GL}}
\newcommand{\Tr}{\mathrm{Tr}}
\renewcommand{\Pr}{\mathrm{Pr}}
\newcommand{\be}{\begin{equation}}
\newcommand{\ee}{\end{equation}}
\newcommand{\old}[1]{}
\begin{document}
\title{Some combinatorial problems arising in the dimer model}
\author{R. Kenyon}
\address{Mathematics Department, Yale University, New Haven}
\email{richard.kenyon@yale.edu}

\subjclass[2020]{Primary 82B20}
\date{}
\maketitle

\section{Introduction} 

We collect here a few diverse problems on the dimer model.
For background on dimers one can read for example \cite{Kenyon.lectures}.

Let $\G=(V,E)$ be a finite, connected, bipartite graph.
A \emph{dimer cover}, or \emph{perfect matching}, of $\G$
is a collection $m\subset E$ of edges that covers each vertex exactly once.
We denote by $\M=\M(\G)$ the set of dimer covers of $\G$.

Let $c:E\to\R_{>0}$ be a positive weight function edges of $\G$. If $m\in\M$ is a dimer cover, 
we define $c(m)=\prod_{e\in m}c_e$ to be the \emph{weight} of $m$. 
%It is natural from a stat mech perspective to let $c_e=\exp(\beta\E_e)$, where $\beta$ is a positive constant and $\E_e$ is a real-valued ``energy" on edge $e$. 
This weight function defines a probability measure $\mu=\mu_{c}$ on $\M$, giving a dimer cover a probability proportional to its weight,
that is,
$\Pr(m) = \frac{c(m)}{Z}$, where the constant of proportionality $Z$ is the \emph{partition function}: 
$Z=\sum_{m\in\M}c(m).$

Essentially the most fundamental problem in the dimer model is the following:
\begin{problem}\label{p1} For a random dimer cover, compute the edge probabilities, that is, compute 
the probability $\Pr(e)$ that any particular edge $e$ is used. 
\end{problem}

This is easy if we can compute $Z$ as a function of edge weights, since 
\be\label{dZ}\Pr(e) = \frac{c_e}{Z}\frac{\partial Z}{\partial c_e} = \frac{\partial \log Z}{\partial \log c_e}.\ee
However on general graphs computing $Z$ is hard, even for constant edge weights: enumerating dimer covers is in fact $\#$P-complete, by a celebrated result of Valiant \cite{Valiant}. 
Note that computing $\Pr(e)$ is in fact just as hard as computing $Z$, since one can reconstruct $Z$ from integrating $\Pr(e)$ using (\ref{dZ}).
For planar graphs we can however compute $Z$ and thus solve the above problem, 
using Kasteleyn's method of enumerating dimer covers with determinants, see
\cite{Kenyon.lectures} and Section \ref{Kastmtx} below.

One useful change of perspective is to consider the edge weights as a ``$\C^*$-local system" on $\G$,
that is, as a connection on a line bundle on the graph. This point of view may seem at first
just an added level of abstraction, and to be taking us away from combinatorics. But it turns out to have a number of advantages, such as:

\begin{enumerate}
\item It highlights a non-obvious symmetry of the problem: the gauge invariance (see below).
\item It generalizes naturally to other gauge groups such as $\SL_n(\R)$, with interesting combinatorial interpretations (see Sections \ref{SL2scn} and \ref{SLnscn} below).
\item It connects the problem to geometry. 
\end{enumerate}

Our choice of problems in this article is (mostly) motivated by this geometric point of view.

\section*{Acknowledgements}  
	
This research was supported by NSF grant DMS-1940932 and the Simons Foundation grant 327929.

\section{Preliminaries}

\subsection{Vector bundles and connections on graphs}

What is a bundle with connection on a graph?
Fix a vector space $\V$ and assign an isomorphic copy $\V_v$ of $\V$ to each vertex $v\in V$.
For each edge $e=uv$ we associate a linear isomorphism $\phi_{uv}:\V_u\to\V_v$
of these vector spaces, with $\phi_{vu}=\phi_{uv}^{-1}$. 
Generally we have $\phi_{uv}\in\GL(\V)$ but we can restrict if desired to other
subgroups of $\GL(\V)$, and even noninvertible endomorphisms \cite{DKS}. In this paper we will only
use $\SL_n(\C)$ connections on $\C^n$ or $\C^*$-connections on $\C$. 
(Here $\C^*=\C\setminus\{0\}$ is the group of $\C$-linear automorphisms of $\C$.)
These are called, respectively, $\SL_n$-local systems  and $\C^*$-local systems.
Sometimes we specialize the latter from $\C^*$ to $\R^*$. 

If we choose a basis for each vector space $\V_v$ then each $\phi_{uv}$ is a matrix. Changing bases by matrices
$A_v$ at vertices $v$ results in new edge matrices $\phi_{uv}'=A_u\phi_{uv}A_v^{-1}$. Such changes are
called \emph{gauge transformations}. They form a symmetry of the system. 

If $\gamma$ is an oriented loop in $\G$ based at $x_0$, the \emph{monodromy} $\phi_\gamma$ of the connection around $\gamma$
is the composition of the isomorphisms $\phi_e$ along $\gamma$ starting at $x_0$. The conjugacy class of $\phi_\gamma$ is invariant under 
gauge transformations. Note also that changing the basepoint along $\gamma$ conjugates the monodromy. 

For one-dimensional bundles, with $\V=\R$,
each $\phi_{uv}\in\R^*$  is a scalar. Since we are assuming $\G$ is bipartite, we can canonically orient the edges from black to white.
Then $\phi_{bw}$ is a scalar quantity associated to edge $bw$ which we call the \emph{edge weight}. 
In probability settings we usually take $\phi_{bw}$ positive.
However it is sometimes useful to consider $\V=\C$ and $\phi_{bw}\in\C^*$.
Gauge transformations are functions $A:V\to\C^*$ and transform $\phi_{bw}$ into $\phi_{bw}' = A_b\phi_{bw}A_w^{-1}$. 

For an $\R$-bundle, when edge weights $\phi_{bw}$ are positive, we can associate a probability measure $\mu$ as above. 
Then positive gauge transformations (i.e. when the $A_v>0$) change the weights of individual dimer covers
but not the probability measure $\mu$:
If we multiply the edge weights of all edges at a single vertex by a constant $\lambda>0$, then the weight of every
dimer cover is multiplied by $\lambda$. This implies that the probability measure $\mu$ only depends on the 
equivalence class of edge weights under gauge transformations, that is, on the ``edge weights modulo gauge".

\subsection{Planar bipartite graphs and the Kasteleyn matrix}\label{Kastmtx}

Let $\G$ be a planar bipartite graph with a $\C^*$-local system $\Phi$ (equivalently, a set of edge weights in $\C^*$, see the previous section).
We define a matrix $K$, the \emph{Kasteleyn matrix for $\G$}, as follows. 
The matrix $K=(K_{wb})$ has rows indexing white vertices and columns indexing black 
vertices, and 
$$K_{wb} = \begin{cases} \pm \phi_{bw}&b\sim w\\0&\text{otherwise}\end{cases}$$
where the signs are chosen by the ``Kasteleyn rule": a face of length $\ell$ has $\frac{\ell}2+1\bmod 2$ minus signs.
Kasteleyn showed in this case that the determinant of $K$ counts weighted dimer covers:

\begin{thm}[\cite{Kast, Kuperberg}]
$|\det K| = \sum_{m\in\M}c(m).$
\end{thm}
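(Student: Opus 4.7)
The plan is to expand $\det K$ into its sum-over-permutations form, match terms to dimer covers, and then show that Kasteleyn's sign rule forces every dimer cover to contribute to $\det K$ with the same sign (up to a global sign). First I would note that $|W| = |B|$ is forced whenever $\M \neq \emptyset$, so $K$ is a square matrix; otherwise both sides vanish. Expanding
\[
\det K = \sum_{\sigma : W \to B} \mathrm{sgn}(\sigma) \prod_{w \in W} K_{w,\sigma(w)},
\]
I observe that the nonzero terms are exactly those bijections $\sigma$ with $w \sim \sigma(w)$ for every $w$, and such bijections are precisely the dimer covers. So each $m \in \M$ contributes $\epsilon(m) \, c(m)$, where $\epsilon(m) \in \{\pm 1\}$ combines $\mathrm{sgn}(\sigma_m)$ with the product of Kasteleyn signs along the edges of $m$. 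It then suffices to prove $\epsilon(m_1) = \epsilon(m_2)$ for any two covers $m_1, m_2$, so that
\[
|\det K| = \Bigl| \epsilon(m_0) \sum_{m \in \M} c(m) \Bigr| = \sum_{m \in \M} c(m).
\]

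For the equality of signs I would use the standard symmetric-difference argument. The superposition $m_1 \triangle m_2$ decomposes into a disjoint union of simple even cycles $C_1, \ldots, C_r$, and the ratio $\epsilon(m_1)/\epsilon(m_2)$ factors as a product of independent contributions, one per cycle. For a single cycle $C$ of length $2k$, the permutation $\sigma_{m_1} \sigma_{m_2}^{-1}$ restricts to a single $k$-cycle, contributing $(-1)^{k-1}$; and the product of the chosen $\pm$-signs along the matching edges of $m_1 \cap C$ divided by those of $m_2 \cap C$ is, up to squares, exactly $\pi_C := \prod_{e \in C} \mathrm{sign}(K_e)$. So I need to verify the sign identity $\pi_C = (-1)^{k-1}$ for every such cycle.

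The key step — and the one I expect to be the main obstacle — is establishing this identity from the face-local Kasteleyn rule. Since $\G$ is planar, $C$ bounds a disk; let $F$ denote the set of interior faces, $E_{\mathrm{int}}$ the interior edges, and $V_{\mathrm{int}}$ the interior vertices. Every interior edge lies on two interior faces while every edge of $C$ lies on exactly one, so multiplying the face products gives $\prod_{f \in F} \pi_f = \pi_C$ (interior-edge signs are squared away). By Kasteleyn's rule, $\pi_f = (-1)^{\ell(f)/2 + 1}$, and using $\sum_{f \in F} \ell(f) = 2 E_{\mathrm{int}} + 2k$ together with Euler's formula $V_{\mathrm{int}} - E_{\mathrm{int}} + |F| = 1$ for the disk, a short parity count gives
\[
\pi_C = (-1)^{V_{\mathrm{int}} + k - 1}.
\]
Finally I would close the argument by observing that because $C \subset m_1 \triangle m_2$, every vertex strictly inside $C$ is matched by both $m_1$ and $m_2$ to another vertex strictly inside $C$ (via edges of $m_1 \cap m_2$ that do not cross $C$), so $V_{\mathrm{int}}$ is even and $\pi_C = (-1)^{k-1}$ as required. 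Together with Step 2 this forces $\epsilon(m_1) = \epsilon(m_2)$, completing the proof.
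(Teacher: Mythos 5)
The paper does not prove this statement: it is quoted from Kasteleyn and Kuperberg without proof, so there is no internal argument to compare against. Your write-up is the standard, complete proof of that cited result, and it is correct: the permutation expansion identifies the nonzero terms with dimer covers; the symmetric-difference decomposition reduces the sign comparison to one alternating cycle at a time; and your parity count --- multiplying the face sign-products over the enclosed faces so that interior edges square away, applying Euler's formula for the disk, and then noting that $V_{\mathrm{int}}$ is even because every interior vertex must be matched to another interior vertex --- correctly yields $\pi_C=(-1)^{k-1}$, which exactly cancels the sign $(-1)^{k-1}$ of the $k$-cycle coming from $\sigma_{m_1}\sigma_{m_2}^{-1}$. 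I find no gap; the one thing worth flagging is that your argument takes the existence of a sign assignment obeying the face rule as given, which is how the paper sets things up (existence is a separate, also classical, fact).
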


As an application, probabilities of one or more edges being in a dimer cover are minors of the inverse of $K$, see \cite{Kenyon.localstats}.

If $\G$ has an $\SL_n$-local system, one can similarly define a Kasteleyn matrix, whose entries are now $n\times n$ matrices themselves;
see Sections \ref{KastSL2},\ref{KastSLn} below.

\section{From edge weights to probabilities}

We wish to study the mapping from edge weights to probabilities.
This requires first defining the correct spaces. 

\subsection{Fractional matchings}

A \emph{fractional matching} of $\G=(V,E)$ is a function $f:E\to[0,1]$ which sums to one at each vertex:
$\sum_{u\sim v}f(u) = 1$. A dimer cover $m$ determines a fractional matching, by assigning $f(e)=1$ for each $e\in m$
and $f(e)=0$ otherwise. The set of fractional matchings $\Omega(\G)$ is a convex polytope in $\R^E$ (in fact a subset of $[0,1]^E$)
whose vertices are exactly the dimer covers of $\G$, see \cite{LP}. 

A probability measure on dimer covers is a probability measure on the vertices of $\Omega$, and so has a barycenter which is a point in
$\Omega$. The coordinate of this barycenter corresponding to an edge is the probability of that edge.

%%%
\old{

}
%%%

So a more elegant statement of Problem \ref{p1} is
\begin{problem}Compute the expected fractional matching.
\end{problem}

See an example in Figure \ref{2by3}.
\begin{figure}
\begin{center}\includegraphics[width=1.5in]{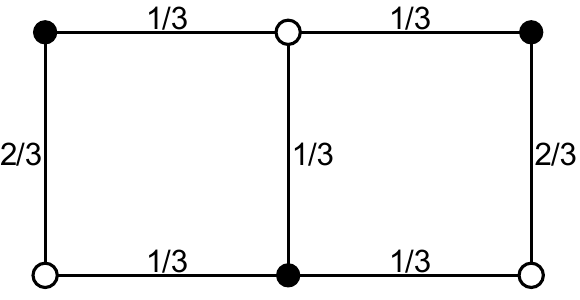}\end{center}
\caption{\label{2by3}A graph and its expected fractional matching (for uniform edge weights).
This graph has three dimer covers; two of these use the leftmost vertical edge, so its probability is $2/3$.}
\end{figure}

\subsection{Cycles}

A \emph{flow} on a graph $\G$ is a function $f$ on oriented edges satisfying $f(-e)=-f(e)$ where $-e$ represents the edge with the reversed orientation. The \emph{cycle space} $H_1(\G)$ is the space of divergence-free flows: flows $f$ satisfying $\sum_{u\sim v}f(vu)=0$ for all $v$. 

Since $\G$ is bipartite, we can orient edges from black to white, and then for any two fractional matchings $f_1,f_2\in\Omega$,
$f_1-f_2$ represents an element of the cycle space of $\G$, denoted $[f_1-f_2]$.
Fixing a basepoint $f_0\in\Omega$, we can map $\Omega$ linearly and injectively into $H_1(\G)$ by $f\mapsto[f-f_0]$.

We say $\G$ is \emph{nondegenerate} if $\Omega$ is of dimension $d=|E|-|V|+1$, the dimension of the cycle space of the graph.
A graph is degenerate (that is, not nondegenerate) if there are unused edges (edges which participate in no dimer cover) whose removal does not disconnect the graph \cite{LP}.
Figure \ref{degen} shows a degenerate graph.
\begin{figure}
\begin{center}\includegraphics[width=1.3in]{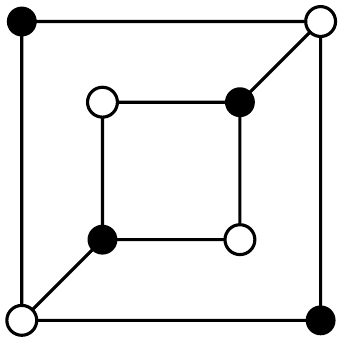}\end{center}
\label{degen}
\caption{A degenerate graph. The dimension of the cycle space is $3$ but $\Omega$ is of dimension $2$. Note that the two diagonal edges are ``unused", that is, do not appear in any dimer cover.}
\end{figure}

\subsection{Gauge equivalence}

For a bipartite graph, the space of equivalence classes of edge weights modulo gauge transformations,
that is, the space of $\C^*$-local systems,
is isomorphic to $(\C^*)^d$ where $d$ is the dimension of the cycle space of the graph. 
Explicitly, for a cycle $\gamma$ we can define $X_\gamma$ to be the monodromy of the connection around $\gamma$. In terms of edge weights, $X_\gamma$ is the alternating product of edge weights around $\gamma$: 
assuming we start at a black vertex, $X_{\gamma}$ is the first weight, divided by the second,
times the third, and so on. 
Note that $X_\gamma$ is invariant under gauge transformations.
Reversing the orientation inverts the monodromy.
If $\gamma$ ranges over a basis for the cycle space, the set of monodromies $\{X_\gamma\}$ parameterize the space of 
all edge weights modulo gauge, that is, all $\C^*$-local systems.

For a planar graph, a basis for the cycle space is the set of cycles around the bounded faces.
Consequently we can take the ``face weights" $\{X_f\}_{f\in F}$ on the bounded faces of the graph to parameterize
the choices of edge weights modulo gauge. 

Now a perhaps more interesting variant of Problem \ref{p1} is:

\begin{problem} For nondegenerate graphs, study the map $\Psi$ from cycle weights $\{X_f\}$ to the expected fractional matching.
\end{problem}

The domain and range of $\Psi$ have the same dimension: the dimension of the cycle space of the graph.

\begin{thm} If $\G$ is nondegenerate then $\Psi$ is a diffeomorphism from the space of cycle weights $(\R_{>0})^d$ to $\Omega(\G)$. 
\end{thm}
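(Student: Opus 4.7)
The plan is to realize $\Psi$ as the gradient map of a strictly convex function on $\R^d$, identified with the space of log cycle weights, and then to conclude via the standard exponential-family/Legendre-duality picture; the nondegeneracy hypothesis enters precisely as the statement that this function is strictly convex modulo gauge.

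First, work in logarithmic coordinates $x_e=\log c_e$ on $\R^E$ and set $F(x)=\log Z=\log\sum_{m\in\M}\exp\!\bigl(\sum_{e\in m}x_e\bigr)$. As a log-sum-exp of affine functions, $F$ is convex, and by (\ref{dZ}) we have $\partial F/\partial x_e=\Pr(e)$; thus the edge-probability map is exactly $\nabla F$, and its image sits inside $\Omega(\G)$ because each $\Pr(e)\in[0,1]$ and $\sum_{e\ni v}\Pr(e)=1$. Next, factor through gauge: the coboundary $A\mapsto(A_b-A_w)_{bw}$ from $\R^V$ to $\R^E$ has one-dimensional kernel (the constants, since $\G$ is bipartite and connected), so its image $G\subset\R^E$ has dimension $|V|-1$. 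On gauge orbits $F(x+\alpha)-F(x)=\ell(\alpha)$ is linear in $\alpha\in G$, so $\nabla F$ descends to a smooth map $\bar\Psi$ on $\R^E/G\cong\R^d$, which we identify with the log cycle weights $\{\log X_f\}$.

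Strict convexity on the quotient is the crux. The Hessian of $F$ at $x$ equals the covariance matrix $\Sigma$ of the random indicator $\mathbf{1}_m\in\R^E$ under $\mu_x$; its kernel consists of $v\in\R^E$ such that $v\cdot\mathbf{1}_m$ is constant over $m\in\M$, i.e., vectors orthogonal to the affine hull of $\Omega$. By nondegeneracy this affine hull has dimension $d$, so $\dim\ker\Sigma=|E|-d=|V|-1=\dim G$; combined with the obvious inclusion $G\subseteq\ker\Sigma$ (from linearity of $F$ along gauge directions), this forces $\ker\Sigma=G$. Hence $\bar F$ is strictly convex on $\R^E/G$ and $\bar\Psi$ has everywhere positive-definite differential, so it is an injective local diffeomorphism into the affine hull of $\Omega$.

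For surjectivity onto the relative interior of $\Omega$, invoke maximum entropy: for any $\mu$ in the interior of $\Omega$, the set of probability measures on $\M$ with mean $\mu$ is nonempty, compact, and convex, and Shannon entropy is strictly concave there, so there is a unique maximizer. Lagrange multipliers force it to be of Gibbs form $p(m)\propto\exp(\lambda\cdot\mathbf{1}_m)$ for some $\lambda\in\R^E$, unique modulo $G$; its image in $\R^d\cong\R^E/G$ is the desired preimage of $\mu$. Together with injectivity and smoothness, this gives the diffeomorphism. The main obstacle is the identification $\ker\Sigma=G$: it is the one step where nondegeneracy is used in an essential way, and it is what prevents the argument from extending to degenerate graphs; everything else is general exponential-family machinery.
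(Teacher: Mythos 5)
Your proof is correct, and its core --- realizing the edge-probability map as $\nabla\log Z$, identifying the Hessian of $\log Z$ in logarithmic coordinates with the covariance matrix of the indicator vector of a random dimer cover, and using nondegeneracy to upgrade positive semidefiniteness to strict convexity transverse to gauge --- is exactly the paper's argument. The differences are in packaging plus one substantive addition. The paper fixes the gauge concretely by choosing a spanning tree $T$, setting tree-edge weights to $1$, and working directly with the $d$ remaining coordinates; you instead work on all of $\R^E$ and quotient by the $(|V|-1)$-dimensional gauge subspace $G$, proving $\ker(\mathrm{Hess}\log Z)=G$ by a dimension count against the affine hull of $\Omega$. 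These are equivalent: the paper's phrasing that ``the Newton polytope of $Z$ has positive volume'' is the same statement as your claim that the differences $\mathbf{1}_m-\mathbf{1}_{m'}$ span a $d$-dimensional space. The substantive difference is surjectivity: the paper's proof stops at ``$\nabla\log Z$ is a diffeomorphism onto its image'' and never argues that the image is all of (the interior of) $\Omega(\G)$, whereas you supply this via the maximum-entropy/exponential-family argument; that step is standard but does require noting that the entropy maximizer has full support when the target point lies in the relative interior of $\Omega$, so that the Lagrange conditions produce a strictly positive Gibbs measure. Two small wording caveats: $\log Z$ is affine, not constant, along gauge orbits, so it is $\nabla\log Z$ rather than $\bar F$ itself that descends to $\R^E/G$ (strict convexity should be asserted for the restriction of $F$ to a transversal of $G$, which is what your kernel computation delivers); and, as in the paper, the target should really be the interior of $\Omega(\G)$, since positive weights never produce edge probabilities equal to $0$ or $1$ on edges that appear in some but not all dimer covers.
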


\begin{proof}
Fix the gauge by choosing a spanning tree $T$ of $\G$ and setting edge weights $1$ on the tree $T$ and weights
$c_e$ for $e\not\in T$.  Let $Z=Z(\{c_e\})$ be the associated partition function. Let $p_e$ be the probability of edge $e$ for $e\not\in T$.
All the other edge probabilities (and therefore the fractional matching $f$) are uniquely determined by the probabilities $\{p_e~|~e\not\in T\}$: 
one can determine the value of $f$ on the leaves of the tree, then remove these and continue with the leaves of the subtrees inductively.
As in (\ref{dZ}) we have
$p_e = c_e\frac{\partial\log Z}{\partial c_e}.$
We can write this as $\vec p = \nabla \log Z$ where the gradient is taken with respect to the vector of ``edge energies" $(\log c_e)_{e\not\in T}$.

Since $Z$ is a positive polynomial (that is, its coefficients are nonnegative), it is, up to scale,
the probability generating function of a probability measure on $\Z^{d}$ of finite support.
The Hessian of $\log Z$ with respect to the logs of its variables is the covariance matrix of this probability measure:
$$c_1c_2\frac{\partial^2 }{\partial c_1\partial c_2}\log Z=\frac{c_1c_2}Z\frac{\partial^2 Z}{\partial c_1\partial c_2} - \frac{c_1c_2Z_{c_1}Z_{c_2}}{Z^2} = \E[X_1X_2]-\E[X_1]\E[X_2]$$
where $X_1,X_2$ are the indicator functions of edges $1,2$. 
 Any covariance matrix is positive semidefinite, and thus this Hessian is positive semidefinite: 
$\mathrm{Hess} \log Z\ge 0$. In fact it is positive definite as long as the Newton polytope of $Z$ is of positive volume,
that is, if the exponents of the monomials of $Z$ span the cycle space. But this is guaranteed by nondegeneracy. 
Since $\mathrm{Hess}(\log Z)$ is positive definite, $\log Z$ is strictly convex and so
$\nabla \log Z$ is a diffeomorphism onto its image.
\end{proof}

Note that $\det(\nabla\Psi)$ is a rational function of the $c_e$. 
What is the degree of $\nabla\Psi$ as a rational map on $\C^d$?
If we extend $\Psi$ to all of $\C^d$ it appears that the inverse of a point $\vec p\in\Omega$ consists in \emph{real} values of $c_e$
(only one preimage consists in positive reals, but there can be multiple non-positive preimages; it is not clear that these are always real,
although they are in the small examples we tried). A similar situation, for resistor networks, where reality of the other preimages is proved, can be found 
in \cite{AK}.

\section{Dimer walks}

To each dimer cover $m\in\M(\G)$ is associated an involution $\pi_m:V\to V$. This is the involution
exchanging a vertex with the vertex it is paired with. 

\begin{problem} Let $\{m_1,m_2,\dots\}$ be a sequence of i.i.d.\! dimer covers of $\G$. Study the random walks on $S_V$, the permutation group on the vertices, defined by 
the $\pi_{m_i}$. 
\end{problem}

Another way to state this is to define $g=\frac1{Z}\sum_{m\in\Omega_1(\G)}\pi(m)\in \R[S_V]$, the element of the group algebra
$\R[S_V]$ of $S_V$ defined by a random choice of dimer cover. What can be said about $g^n$?

As a simple example one can take $\G=K_4$ with vertices $[4]=\{1,2,3,4\}$. The three dimer covers
correspond to the three permutations $(12)(34),(13)(24),(14)(23)$ which generate a subgroup of $S_4$ isomorphic to $(\Z_2)^2$. The dimer random walk just becomes simple random walk on this group, 
where every element
is adjacent to any other element, that is, we have simple random walk on $K_4$.

For another simple example, let $\G$ be the $3\times 2$ grid. Rather than record the element in $S_6$, without loss of information 
we can record simply
the permutations of the $x$-coordinates of the vertices, in $S_3$. There are $3$ dimer covers,
and the walk corresponds to multiplication by $g=\frac13(1+e_{(12)}+e_{(23)})$. Its eigenvalues
are $1,\frac23,\frac23,-\frac13,0,0$.  

As a more interesting example, when $\G$ is the $n\times n$ grid graph on a torus, 
each vertex undergoes a simple random walk on $V$,
since with probability $1/4$ it is matched to any neighbor. These simple random walks are coupled to avoid each other.
How quickly does the process mix? What is the expected winding of two walks around each other, as a function of time?

\section{Magnetic double dimer model}

\subsection{Double dimers}
Recall that $\M=\M(\G)$ is the set of dimer covers of $\G$.
A \emph{double dimer cover} is a function $m:E\to\{0,1,2\}$ which sums to $2$ at each vertex. See Figure \ref{doubledimer}.
\begin{figure}
\begin{center}\includegraphics[width=3in]{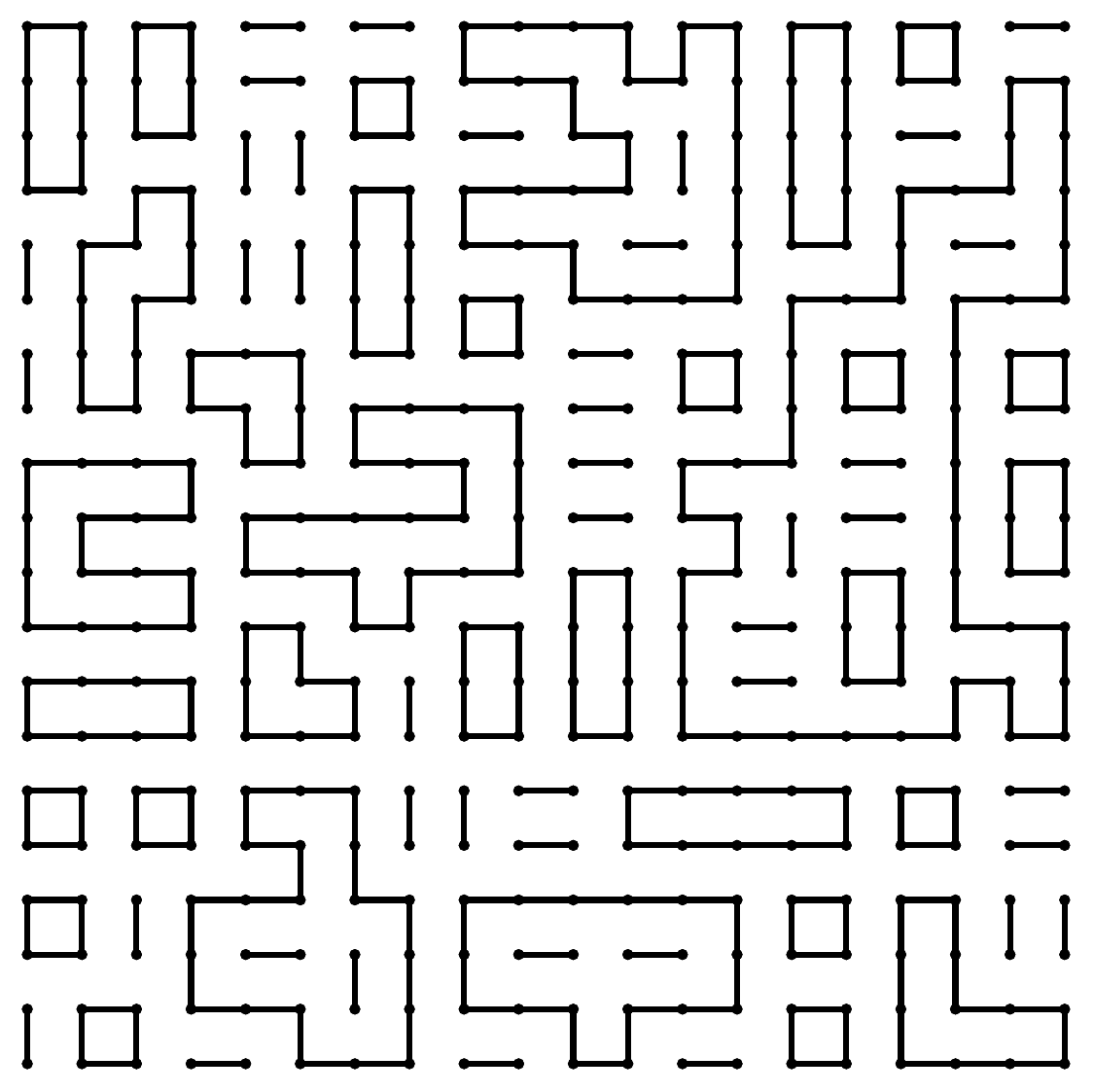}\end{center}
\caption{\label{doubledimer} A double dimer cover of a grid. Edges of multiplicity $1$ or $2$ are shown.}
\end{figure}
A union of two single dimer covers determines a double dimer cover, and (for bipartite graphs) 
every double dimer cover occurs this way.
The map from pairs of dimer covers to double dimer covers is not, however, injective: 
each double dimer cover $m$ arises from $2^\ell$ ordered pairs of single dimer covers,
where $\ell$ is the number of cycles in $m$, where the cycles are formed from the edges of multiplicity $1$.

Let $\M_2$ be the set of double dimer covers of $\G$. 
The natural probability measure on $\M_2$ (for us) is \emph{not} the uniform measure. It is, rather, the projection of the
uniform measure on $\M\times\M$ under the standard map $\M\times\M\to\M_2$. Thus a double dimer cover $m$
has a probability proportional to $2^\ell$ where $\ell$ is the number of cycles (cycles formed from edges of multiplicity $1$; edges of multiplicity $2$ do not count as cycles).
We call $2^\ell$ the \emph{weight} $c(m)$ of $m$.

\begin{problem}\label{p4}
What can be said about the distribution of loops in the double dimer model on $\Z^2$?
\end{problem}

Let $\G$ be a bipartite planar graph, and $q\in\C^*$ be a variable. 
One can define a $\C^*$-connection on $\G$ so that each face has counterclockwise monodromy $q$, see for example
Figure \ref{qweights}.
\begin{figure}
\begin{center}\includegraphics[width=2in]{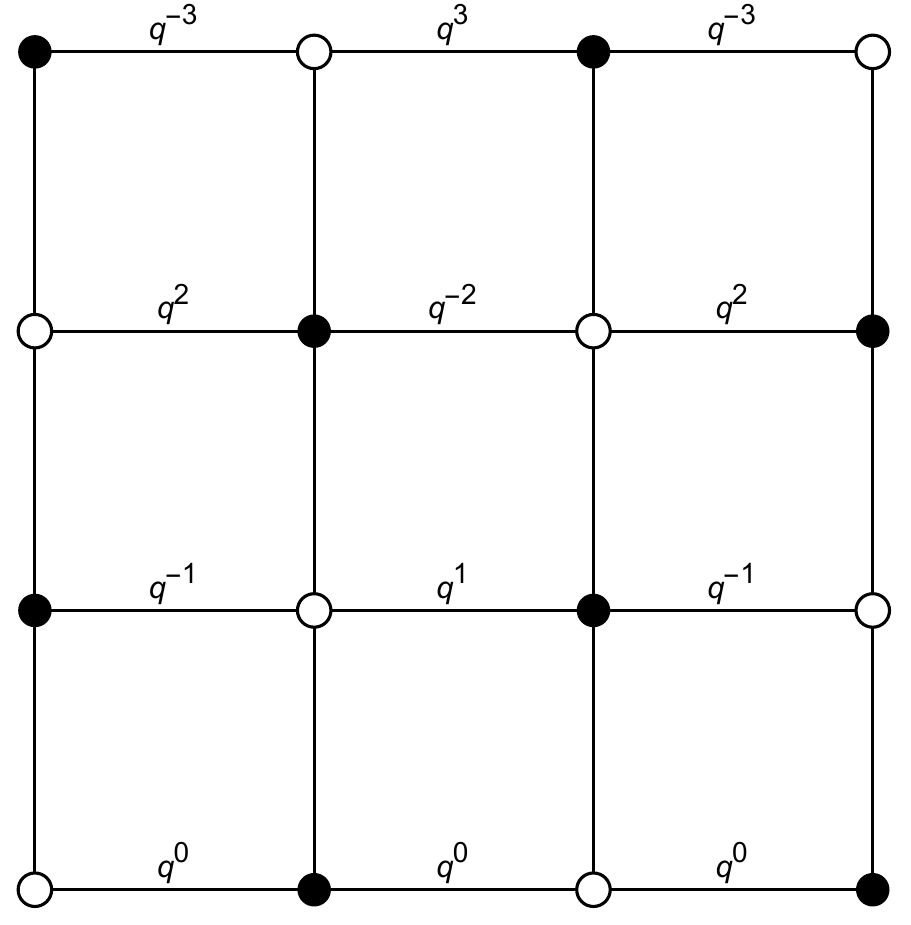}\hskip1cm\includegraphics[width=2in]{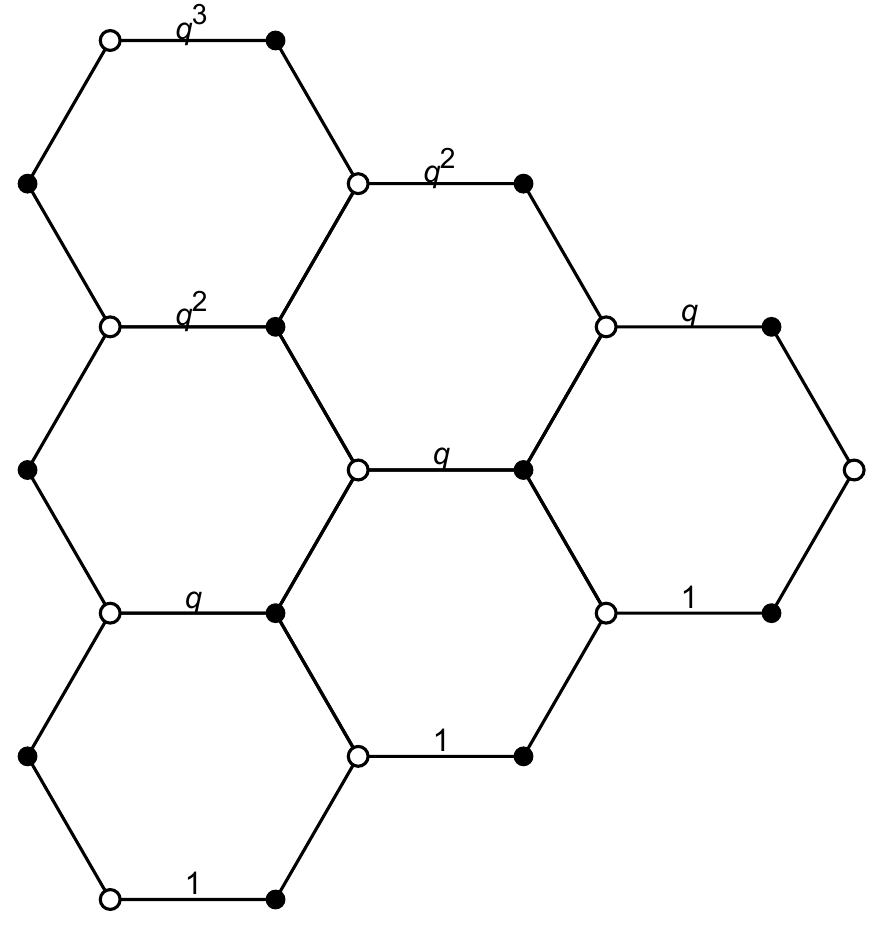}\end{center}
\caption{\label{qweights}Connections with monodromy $q$ per face.}
\end{figure}
Let $K(q)$ be an associated Kasteleyn matrix.
For a double-dimer configuration $m_2$ define
\be\label{cm2}c(m_2) = \prod_{\text{loops $\gamma$ of $m_2$}}(q^{A_\gamma}+q^{-A_\gamma})\ee
where $A_\gamma$ is the area (number of faces) enclosed by $\gamma$.
We then have
\begin{thm} 
\be\label{cm2det} \det K(q) \det K(1/q)=\sum_{m_2\in\M_2}\,\, \prod_{\text{loops $\gamma$ of $m_2$}}(q^{A_\gamma}+q^{-A_\gamma}).\ee
\end{thm}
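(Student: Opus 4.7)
The plan is to expand the product $\det K(q)\det K(1/q)$ as a sum over ordered pairs of single dimer covers, then reorganize the sum according to the double dimer cover obtained as their union.

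First I would apply Kasteleyn's theorem (stated earlier in the paper) to both factors separately. For the $\C^*$-local system with face monodromy $q$, the theorem gives $\det K(q) = \epsilon \sum_{m\in\M} c_q(m)$ where $\epsilon=\pm1$ is a sign depending only on the Kasteleyn orientation, not on $q$ or on the dimer cover. Since $K(1/q)$ uses the same Kasteleyn signs, the prefactor $\epsilon$ is the same for both. Hence
\be\label{prodexp}
\det K(q)\det K(1/q) = \sum_{(m,m')\in\M\times\M} c_q(m)\, c_{1/q}(m').
\ee

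Next I would use gauge freedom to normalize the connection $\Phi(q)$ so that $\phi_{bw}(q)\phi_{bw}(1/q)=1$ for every edge $bw$. (Such a symmetric choice exists: starting from any representative with face monodromy $q$, multiply it by a gauge transformation that symmetrizes it; the monodromies, which are the only gauge invariants, are unchanged.) Now fix an ordered pair $(m,m')$ with union $m_2\in\M_2$. The symmetric difference $m\triangle m'$ is a disjoint union of even cycles $\gamma_1,\dots,\gamma_\ell$; the remaining edges form the doubled edges $m\cap m'$. In the product $c_q(m)c_{1/q}(m')$, every doubled edge $bw$ contributes $\phi_{bw}(q)\phi_{bw}(1/q)=1$. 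On a loop $\gamma_i$, traversing it so that $m$-edges go black-to-white and $m'$-edges go white-to-black, the contribution is exactly the monodromy of $\Phi(q)$ around $\gamma_i$ in that orientation, namely $q^{\pm A_{\gamma_i}}$, the sign depending on whether the induced orientation is counterclockwise or clockwise.

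Now I would partition the sum in \eqref{prodexp} by first choosing $m_2\in\M_2$ and then summing over the $2^\ell$ ordered decompositions $m_2=m\sqcup m'$, where $\ell=\ell(m_2)$ is the number of loops of $m_2$. Each loop can be split into its $m$-edges and $m'$-edges in two ways, and the two choices flip the induced orientation of that loop, replacing $q^{A_{\gamma_i}}$ by $q^{-A_{\gamma_i}}$. Because the contributions factor across loops, summing over the two choices per loop gives $\prod_{i=1}^{\ell}(q^{A_{\gamma_i}}+q^{-A_{\gamma_i}})$, and the total is exactly the right side of \eqref{cm2det}.

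The step I expect to be the main obstacle is verifying the area interpretation of the monodromy along a loop in a planar bipartite graph, i.e., checking that the alternating product of $\phi(q)$'s around $\gamma$ equals $q^{A_\gamma}$ with the correct sign convention. This amounts to showing that multiplying the face monodromies of the faces enclosed by $\gamma$ telescopes to the loop monodromy, which follows from the fact that interior edges are shared by exactly two adjacent faces with opposite orientations. Once this identification is secured, and once the uniform Kasteleyn sign $\epsilon$ is checked to be the same for $K(q)$ and $K(1/q)$, the rest of the argument is the standard bijective reorganization of the double product.
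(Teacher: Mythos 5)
Your proposal is correct and follows essentially the same route as the paper's own proof: expand $\det K(q)\det K(1/q)$ over ordered pairs of dimer covers, superimpose them with consistent loop orientations so that each loop contributes its monodromy $q^{\pm A_\gamma}$, and sum over the $2^\ell$ decompositions of each double dimer cover to produce the factors $q^{A_\gamma}+q^{-A_\gamma}$. The points you flag as needing care (the common Kasteleyn sign, the edge-wise normalization $\phi_{bw}(q)\phi_{bw}(1/q)=1$, and the telescoping of face monodromies to give the area) are exactly the details the paper leaves implicit, and your treatment of them is sound.
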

\begin{proof}
%%%
\old{
The quantity $\det K(q)$ counts
single-dimer configurations $m$ with weight 
$c(m) = Cq^h$
where $C$ is a constant and $h$ is the total height function: the sum of the values of the height on all faces (see \cite{Kenyon.lectures} for the definition of heights).
This follows, for example, because the set of dimer covers is connected by face moves (see Figure \ref{facemove} and \cite{Kuperberg}) 
each of which changes
the height by $\pm 1$ and the weight by $q^{\pm1}$. 
\begin{figure}
\begin{center}\includegraphics[width=2.5in]{facemove}\end{center}
\caption{\label{facemove}If every other edge of a face is in a dimer cover, one can rotate the edges by one notch around the face
to create another dimer cover. By a theorem of Kuperberg \cite{Kuperberg}, 
these ``face moves" connect the set of all dimer covers of any planar bipartite graph.}
\end{figure}
Likewise $\det K(1/q)$ counts single-dimer covers with weight $c(m) = Cq^{-h}.$
}
%%%

The quantity $\det K(q)\det K(1/q)$ counts pairs of dimer configurations $(m,m')$, where $m'$ has the inverted $q$-weight. In the superposition of $m$ and $m'$, we orient $m$'s dimers from black to white and $m'$'s
dimers from white to black, reversing the parallel transport, so that the orientations of loops are consistent. In this way the weight of an oriented loop is given by its monodromy,
that is, $q^A$ where $A$ is the area enclosed by the loop. When we sum over both orientations of each loop, 
we find (\ref{cm2det}).
\end{proof}

It is tempting to use (\ref{cm2det}) to answer Problem \ref{p4}. For example one can expand $\det K(q)\det K(1/q)$ near $q=1$ to get various moments. However extracting information about the areas from this does not appear easy.

An alternative approach is to invert the standard Kasteleyn matrix (when $q=1$) and use it to compute the probability of any given shape of loop.
For the square grid $\Z^2$, this can be used to compute the expected density of loops of fixed small area, as follows.
The ``inverse Kasteleyn matrix" for $\Z^2$ can be obtained as a limit as $n\to\infty$ of $K^{-1}_{\G_n}$ for $\G_n$ the $2n\times 2n$ grid,
see \cite{Kenyon.lectures}. There is an explicit formula: when $b-w=(x,y)\in\Z^2$,
$$K^{-1}((0,0),(x,y)) = \frac1{(2\pi i)^2}\iint_{|z|=|w|=1}\frac{z^{(-x+y+1)/2}w^{(-x-y+1)/2}}{1+z+w-zw}
\frac{dz}{z}\frac{dw}{w}.$$
Minors of this matrix compute edge probabilities for random dimer covers of $\Z^2$, see \cite{Kenyon.localstats}:
$$Pr((w_1,b_1),\dots,(w_k,b_k)) =|\det(K^{-1}(b_j,w_i)_{1\le i,j\le n})|.$$
The
expected density of loops of area $1$ (the probability that a given face is in such a loop) is the probability that for a pair of single dimer models
$(m,m')$
on $\Z^2$, $m$ contains edges $(0,0)(1,0)$ and $(0,1)(1,1)$ and $m'$
contains edges $(0,0)(0,1)$ and $(1,0)(1,1)$, or the reverse.  The probability that $m$ contains both edges $(0,0)(1,0)$ and $(0,1)(1,1)$ is, by a short computation, $\frac1{8}$.
So the expected density of loops of area $1$ in the double dimer model is $\frac{2}{8^2}=\frac1{32}$.
Similarly the density of loops 
of area $2$ can be computed to be $\frac{(\pi-1)^2}{2\pi^4}.$
The density for area-$3$ loops is already more complicated:
$$\frac{3 \left(64-192 \pi ^2+192 \pi^3-32 \pi ^4-32 \pi ^5+24 \pi^6-8 \pi ^7+\pi ^8\right)}{32 \pi^8}.$$

\section{ $\text{SL}_2$ connections}\label{SL2scn}

\subsection{Kasteleyn matrix}\label{KastSL2}
Consider $\G$ a planar bipartite graph with an $\SL_2$-local system $\Phi$.
Let $K=(K_{wb})$ be an associated Kasteleyn matrix for $\G$, that is a matrix with rows indexing white vertices and columns indexing black 
vertices, and 
$$K_{wb} = \begin{cases} \pm \phi_{bw}&b\sim w\\0&\text{otherwise}\end{cases}$$
where the signs are chosen by the same Kasteleyn rule as in the scalar case: a face of length $\ell$ has $\frac{\ell}2+1\bmod 2$ minus signs.
Here by $0$ we mean the zero matrix in $M_2(\R)$. 
Then $K$ is an $N\times N$ matrix with entries in $M_2(\R)$.
Let $\tilde K$ be the $2N\times 2N$ matrix obtained from $K$ by replacing each entry with its $2\times 2$ array of real numbers.

By a theorem of \cite{DKS}, 
\be\label{SL2sum}\det\tilde K(\Phi)= \sum_{m\in\M_2} \prod_{\text{loops in $m$}}\Tr(\phi_\gamma).\ee
Here $\phi_\gamma$ is the monodromy of the connection $\Phi$ around the loop $\gamma$, and $\Tr(\phi_\gamma)$ is its trace. 
Even though
$\phi_{\gamma}$ depends on a starting vertex and the orientation of the loop, the trace $\Tr(\phi_\gamma)$
is independent of starting vertex (since the trace of a matrix only depends on its conjugacy class) and orientation
(since for matrices $A\in\text{SL}_2$ we have $\Tr(A)=\Tr(A^{-1})$).

\subsection{Flat connections and simple laminations}
Suppose graph $\G$ is drawn on a multiply-connected planar domain $\Sigma$, and $\Phi$ is a \emph{flat} $\SL_2$ connection.
This means the monodromy of $\Phi$ around any cycle in $\G$ which is contractible as a cycle in $\Sigma$, is the identity.
In this case the trace of the monodromy around a loop $\gamma$ only depends on its isotopy class as a loop in $\Sigma$.
In other words two loops in $\G$ which are isotopic as loops in $\Sigma$, have monodromies with the same trace. 

A \emph{simple lamination} is an isotopy class of finite collections of pairwise disjoint simple closed loops on $\Sigma$. 
Let $\Lambda_2$ be the collection of simple laminations.

When $\Phi$ is flat we can group the terms in the sum (\ref{SL2sum}) according to their isotopy classes:

\be\label{SL2sumiso}\det\tilde K(\Phi)= \sum_{\lambda\in\Lambda_2} C_\lambda\Tr(\phi_\lambda)\ee
with $C_\lambda\in\Z_{\ge0}$.

By a Theorem of Fock and Goncharov \cite{FG}, the functions $\Tr(\phi_\lambda)$ as $\lambda$ runs over $\Lambda_2$,
considered as functions of $\Phi$,
are linearly independent and in fact form a linear basis for the regular (i.e. polynomial) functions on the character
variety (the space of flat $\mathrm{SL}_2$-local systems $\Phi$ modulo gauge). 
As a consequence $C_\lambda$ can be in principle determined from $\det\tilde K(\Phi)$ as $\Phi$ varies over flat connections. 
Mysteriously, even though for a finite graph $\det \tilde K(\Phi)$ is a polynomial function of the matrix entries,
and the sum (\ref{SL2sumiso}) is a finite sum, it is not at all clear how to extract the individual $C_\lambda$ from it. 

\begin{problem}\label{P5} How can one extract $C_\lambda$ from $\det\tilde K(\Phi)$ as in (\ref{SL2sumiso})?
\end{problem}

For an example where we know how to extract these coefficients, 
consider the simplest nontrivial case when $\Sigma$ is an annulus. 
Suppose graph $\G$ is drawn on an annulus with flat connection having monodromy $A$ around the nontrivial isotopy class $\gamma$,
the generator of $\pi_1(\Sigma)$.
Then a simple lamination on $\G$ just consists in some number $n$ of copies of loops each isotopic to $\gamma$
(we are ignoring orientation). 
The isotopy classes $\Lambda_2$ are thus in correspondence with the set of nonnegative integers.
We can write 
$$\det\tilde K(A) =\sum_{j=0}^\infty C_j\Tr(A)^j.$$
From this expression it is easy to extract $C_j$: let $\Tr(A)=z$, then the $C_j$ are power series coefficients of $\det\tilde K(A)$.

The next simplest planar surface is a disk with two small disks $p_1,p_2$, removed (also known as a ``pair of pants").
In this case the space $\Lambda_2$ has a simple parameterization by $(\Z_+)^3$
where $(i,j,k)$ corresponds to the lamination with $i,j,$ and $k$ loops surrounding respectively $p_1$ only, $p_2$ only or both $p_1$ and $p_2$,
see Figure \ref{ijk}. 
\begin{figure}
\begin{center}\includegraphics[width=2.5in]{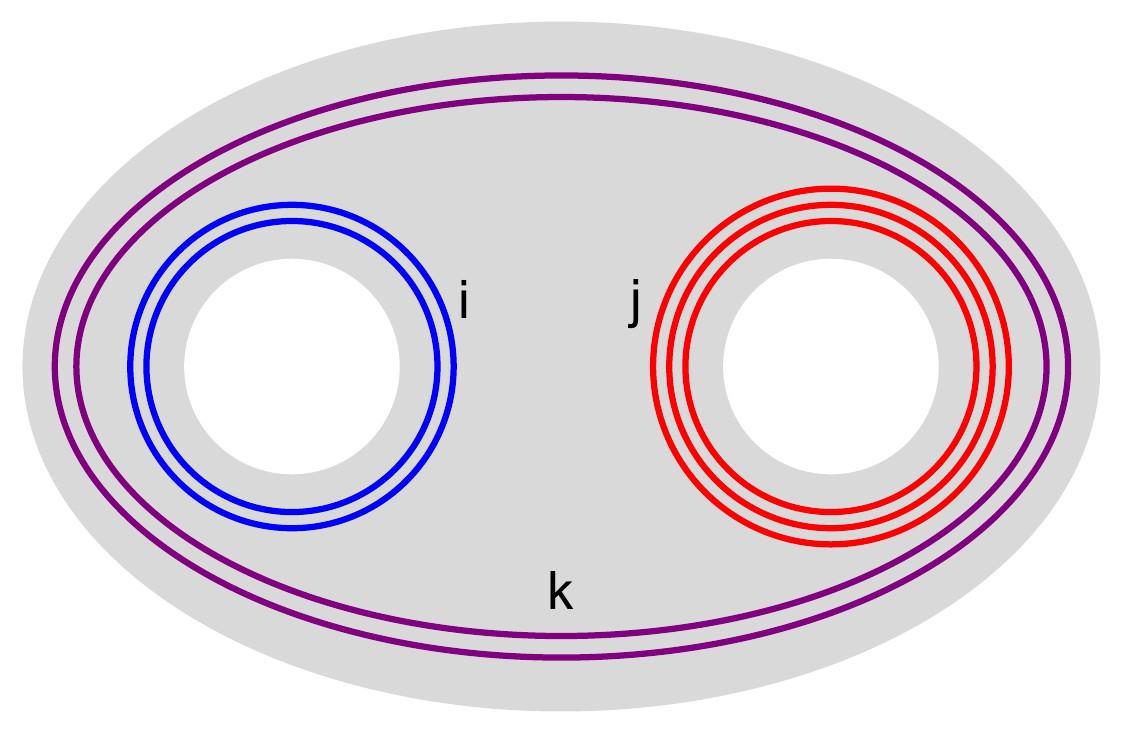}\hskip1cm\end{center}
\caption{\label{ijk}Simple laminations on a pair of pants consist in three kinds of loops:
those surrounding $p_1,p_2$ or both.}
\end{figure}
If $A,B$ are the monodromies around $p_1,p_2$ respectively
then given any $x,y,z\in\C^*$ we can choose $A,B$ in $\mathrm{SL}_2(\C)$ so that $x=\Tr(A), y=\Tr(B), z=\Tr(AB)$.
Then one can extract $C_{i,j,k}$ by a contour integral
$$C_{i,j,k} = \frac1{(2\pi i)^3}\int_{(S^1)^3} \frac{\det K(\Phi)}{x^iy^jz^j}\frac{dx}{x}\frac{dy}{y}\frac{dz}{z}.$$

For a general punctured surface Fock and Goncharov \cite{FG} describe a different (and orthogonal) basis on the space of laminations, the ``Peter-Weyl basis",
and show that the basis $\{\Tr(\phi_\lambda)\}_{\lambda\in\Lambda_2}$ is obtained from the Peter-Weyl basis by an upper triangular transformation. 
One route to solving Problem \ref{P5} would be to write down this base-change matrix.

\section{$n$-fold dimer model}\label{SLnscn}

\subsection{$n$-multiwebs}
An \emph{$n$-multiweb}, or \emph{$n$-fold dimer cover} in $\G$ is a function $m:E\to\{0,1,2,\dots,n\}$ which sums to $n$ at each vertex. 
So a $1$-multiweb is a dimer cover and a $2$-multiweb is a double dimer cover.
A union of $n$ single dimer covers is an $n$-multiweb, and every $n$-multiweb occurs this way
(although not uniquely).

Let $\M_n$ be the set of $n$-multiwebs of $\G$. 
The natural probability measure on $\M_n$ is, like in the double dimer case, the projection of the
uniform measure on $\M^n$ under the standard map $\M^n\to\M_n$. 
Unlike the $n=2$ case, the size of the preimage is difficult to compute in general, in fact $\#$P-complete:
if $n=3$ and a $3$-multiweb is a trivalent graph (that is, all edges have multiplicity $1$),
then the triples of dimer covers in the preimage are in bijection with edge $3$-colorings, or Tait colorings.
Tait colorings are colorings of the edges with three colors so that each color appears at each vertex.
For planar graphs Tait colorings are dual to proper $4$-colorings of the vertices of the dual graph.

\subsection{Kasteleyn matrix}\label{KastSLn}
Consider $\G$ a planar bipartite graph with an $\SL_n$-local system $\Phi$.
Let $K=(K_{wb})$ be a Kasteleyn matrix for $\G$; $K$ is an $N\times N$ matrix with entries in $M_n(\R)$.
Let $\tilde K$ be the $nN\times nN$ matrix obtained from $K$ by replacing each entry with its $n\times n$ array of reals.

By a theorem of \cite{DKS}, 
\be\label{SLnsum}\det\tilde K(\Phi)= \sum_{m\in\M_n} \Tr(\phi_m)\ee
where we still need to define the trace of an $n$-multiweb with an $\SL_n$-connection; see the next section.

\subsection{Trace of a $3$-multiweb}

The trace of an $n$-multiweb $m$ is not simple to define; it is a contraction of $n$-tensors defined at each vertex. We refer to \cite{DKS} for the general definition and give here a working 
definition for $n=3$. We cut each edge of $\G$ into two half-edges, one associated with each of its vertices.
We consider colorings $c$ of the half-edges of $m$ with three colors $\{1,2,3\}$, so that an edge of multiplicity $j\in\{0,1,2,3\}$ gets a set of $j$ colors,
and so that all colors appear at each vertex of $\G$. See Figure \ref{halfcolorings}. 
\begin{figure}
\begin{center}\includegraphics[width=2.5in]{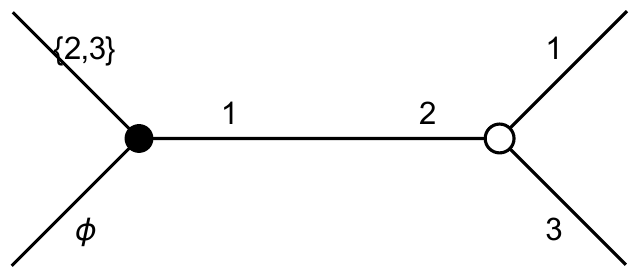}\end{center}
\caption{\label{halfcolorings}$3$-coloring of the half-edges of $\G$.}
\end{figure}

Then on an edge $e$ of multiplicity $j$ we have two sets of colors $S,T$ of size $j$,
with $S$ located near the white vertex and $T$ near the black vertex. We define 
\be\label{tracedef}Tr(c) = \sum_{\text{colorings $c$}}sgn(c)\prod_e(\phi_{bw})_S^T\ee
where $(\phi)_S^T$ denotes the $S,T$ minor of $\phi$.
Here $sgn(c)$ is a sign depending on the cyclic order of colors at each vertex: at a black vertex if the colors are in counterclockwise order we get sign $+$
and otherwise sign $-$, and this convention is reversed at a white vertex; the product of these signs over all vertices is $sgn(c)$.
(For edges with multiplicity the colors are assumed in natural order within that set.)

When $\Phi$ is the identity connection, and $m$ is planar, $|\Tr(m)|$ is the number of
Tait colorings of $m$ (see \cite{DKS}). In other words, all nonzero terms of (\ref{tracedef}) have the same sign. The global sign of $\Tr(m)$ is unimportant since it depends on an artificial choice of vertex ordering.

\subsection{Flat connections and reduced $3$-webs}
Suppose $\G$ is drawn on a multiply-connected planar domain $\Sigma$, and $\Phi$ is a \emph{flat} $\SL_3$ connection.

A vertex in a $3$-multiweb is \emph{trivalent} if it has three adjacent edges of multiplicity $1$.
A $3$-multiweb consists in a collection of trivalent vertices connected in pairs by chains of edges with
multiplicities alternating between $1$ and $2$ along the chains.

We say a $3$-multiweb is \emph{reduced} or \emph{nonelliptic} if each component, considered as a planar graph by itself, has no contractible faces with
$0,2$ or $4$ trivalent vertices.  

There are certain \emph{skein} relations by which the trace of any $3$-multiweb can be written as a linear combination
of traces of reduced $3$-multiwebs (see Figure \ref{skeins}).
\begin{figure}
\begin{center}\includegraphics[width=.8in]{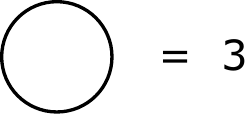}\\
\includegraphics[width=2.5in]{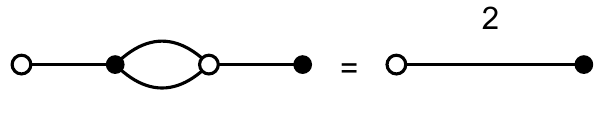}\\
\includegraphics[width=2.5in]{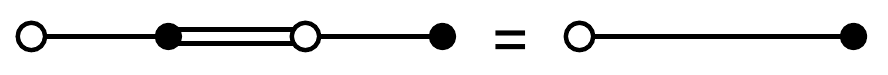}\\
\includegraphics[width=2.5in]{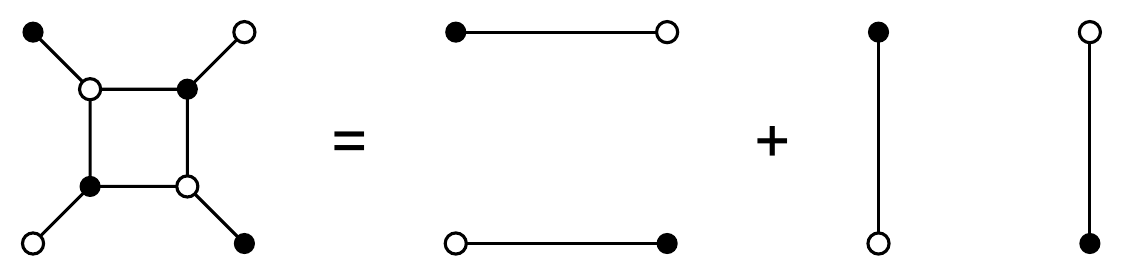}\end{center}
\caption{\label{skeins}Relations between $3$-multiwebs: a loop can be removed, increasing the weight 
of the remaining web by a factor of $3$; a bigon can be replaced with a single edge as shown multiplying
the weight by $2$; a doubled edge can be replaced with a single edge as shown, a square face can be resolved into a linear combination of two webs.}
\end{figure}
The set of isotopy classes of the reduced $3$-multiwebs which arise from reducing a given web
are well defined.

As a consequence for a flat $SL_3$-connection we can group the terms in the sum (\ref{SLnsum}) according to isotopy classes
of reduced $3$-multiwebs:

\be\label{SL3sumiso}\det\tilde K(\Phi)= \sum_{\lambda\in\Lambda_3} C_\lambda\Tr(\phi_\lambda).\ee

By a Theorem of Sikora and Westbury \cite{SW}, the functions $\Tr(\lambda)$ as $\lambda$ runs over $\Lambda_3$ 
form a linear basis for the regular (i.e. polynomial) functions on the character
variety of flat $SL_3$-local systems modulo gauge. 
As a consequence $C_\lambda$ can be in principle determined from $\det\tilde K(\Phi)$ as $\Phi$ varies over flat connections. 

\begin{problem} How can one extract $C_\lambda$ from $\det\tilde K(\Phi)$?
\end{problem}

Applying the skein relations to a non-reduced $3$-multiweb results in a collection of reduced webs
which depends on the order in which the skein relations are applied.
Even though the isotopy classes of the reduced web are well-defined, the individual webs themselves
will depend on the order. Is there a way to make a canonical choice in this reduction process,
so that starting from a random $3$-multiweb we arrive at a well-defined, canonical probability measure on reduced $3$-multiwebs?

\bibliographystyle{amsplain}

\end{document}